\newtheorem{prop}[subsection]{Proposition}
\newtheorem{teor}[subsection]{Theorem}
\newtheorem{lema}[subsection]{Lemma}
\newtheorem{cor} [subsection]{Corollary}
\theoremstyle{definition}
\theoremstyle{remark}
\newtheorem{exm} [subsection]{Example}
\newcommand{\npd}{\left\lfloor \frac{n}{2} \right\rfloor}
\def\dt{\operatorname{dt}}
\numberwithin{equation}{section}
\begin{document}

\title[A combinatorial approach to the Fourier expansions of powers of cos and sin]
      {A combinatorial approach to the Fourier expansions of powers of cos and sin}
\author[Mircea Cimpoea\c s]{Mircea Cimpoea\c s$^1$}  
\date{}

\keywords{Combinatorial identities; Trigonometric functions; Fourier series.}

\subjclass[2020]{05A19, 26A09, 42A20}

\footnotetext[1]{ \emph{Mircea Cimpoea\c s}, National University of Science and Technology Politehnica Bucharest, Faculty of
Applied Sciences, 
Bucharest, 060042, Romania and Simion Stoilow Institute of Mathematics, Research unit 5, P.O.Box 1-764,
Bucharest 014700, Romania, E-mail: mircea.cimpoeas@upb.ro,\;mircea.cimpoeas@imar.ro}

\begin{abstract}
We present a new combinatorial approach to the computation of the (real) Fourier expansions of $\cos^n(t)$ and $\sin^n(t)$,
where $n\geq 1$ is an integer. As an application, we compute the Fourier expansions of $f(t)=\frac{1}{a-\cos t}$ and
$g(t)=\frac{1}{a-\sin t}$, where $a\in\mathbb R$ with $|a|>1$.
\end{abstract}

\maketitle

\section{Introduction}

Fourier series plays a very important role, both in pure mathematics, but also in different branches of applied mathematics 
and engineering, like,  for instance, in the study of periodic signals. For a friendly introduction 
in the topic of Fourier series and Fourier analysis, we refer the reader to \cite{stein}.

It is well known that if $f:\mathbb R\to\mathbb R$ is a twice continuously differentiable 
periodic function, with the period $T=2\pi$, then $f(t)$ has the (real) Fourier expansion:
\begin{align*}
& f(t) = \frac{a_0}{2}+\sum_{n=1}^{\infty} a_n\cos(nt) + \sum_{n=1}^{\infty} b_n\sin(nt),\text{ where }\\
& a_n=\frac{1}{\pi}\int_{0}^{2\pi} f(t)\cos(nt)\dt,\text{ for all }n\geq 0,\text{ and }
  b_n=\frac{1}{\pi}\int_{0}^{2\pi} f(t)\sin(nt)\dt,\text{ for all }n\geq 1.
\end{align*}	
Moreover, the Fourier series is uniformly convergent; see \cite[Corollary 2.2.4]{stein}.

Our aim is to find a new, combinatorial, approach to the computation of the the 
Fourier series of $\cos^n t$ and $\sin^n t$. Our paper is structured as follows:

In Section $2$ we prove
several combinatorial formulas which will be used later on. For instance, in Lemma \ref{lem1}, we prove that \small
$$  \sum_{j\geq 0} \binom{n}{2j}\binom{j}{s} = 2^{n-2s-1}\left(\binom{n-s}{s}+\binom{n-s-1}{s-1}\right),
    \sum_{j\geq 0} \binom{n}{2j+1}\binom{j}{s}  =2^{n-2s-1}\binom{n-s-1}{s},$$
    \normalsize $\text{ for all }n\geq 1,s\geq 0$.
		
Section $3$ is dedicated to our main results. In Proposition \ref{p1} we proved that for any integer $n\geq 1$, we have that
$$\cos(nt) = \sum_{s=0}^{\npd} (-1)^s 2^{n-2s-1}\left(\binom{n-s}{s}+\binom{n-s-1}{s-1}\right) \cdot \cos^{n-2s}(t),\text{ for all }t\in\mathbb R.$$
In Proposition \ref{p2}, we prove similar formulas for $\sin(nt)$, where $n\geq 1$.

In Theorem \ref{t1} we proved that for any integer $k\geq 0$, we have that
\begin{align*}
& \cos^{2k}(t) = \frac{1}{2^{2k-1}} \sum_{j=0}^{k-1} \binom{2k}{j} \cos((2k-2j)t) + \frac{1}{2^{2k}}\binom{2k}{k}\text{ and }\\
& \cos^{2k+1}(t) = \frac{1}{2^{2k}} \sum_{j=0}^{k} \binom{2k+1}{j} \cos((2k+1-2j)t),\text{ for all }t\in\mathbb R.
\end{align*}
Using the fact that $\sin(t)=\cos(t-\frac{\pi}{2})$, we derive similar formulas for $\sin^{2k}(t)$ and $\sin^{2k+1}(t)$;
see Corollary \ref{c1}. 

As an application, in Section $4$, 
we compute the Fourier expansions of $f(t)=\frac{1}{a-\cos t}$ and $g(t)=\frac{1}{a-\sin t}$, where $a\in\mathbb R$
with $|a|>1$; see Theorem \ref{t2} and Corollary \ref{c2}.

\section{Combinatorial lemmata}

For any integers $n\geq 0$ and $s\geq 0$, we denote
\begin{equation}\label{alfa}
\alpha_{n,s} = \sum_{j\geq 0} \binom{n}{2j}\binom{j}{s},\;\alpha'_{n,s} = \sum_{j\geq 0} \binom{n}{2j-1}\binom{j}{s}
\text{ and }\alpha''_{n,s}=\sum_{j\geq 0} \binom{n}{2j+1}\binom{j}{s}.
\end{equation}
Note that $\alpha_{n,s}=0$, for $s>\npd$, $\alpha'_{n,s}=0$, for $s>\left\lfloor \frac{n+1}{2} \right\rfloor$
and $\alpha''_{n,s}=0$, for $s>\left\lfloor \frac{n-1}{2} \right\rfloor$.

\begin{lema}\label{lem1}
With the above notations, we have:
\begin{align*}
&(1)\; \alpha_{n,s} = 2^{n-2s-1}\left(\binom{n-s}{s}+\binom{n-s-1}{s-1}\right),\text{ for all }n\geq 1,s\geq 0, \\
&(2)\; \alpha'_{n,s} = 2^{n-2s-1}\left(\binom{n+1-s}{s}+2\binom{n-s}{s-1}+\binom{n-s-1}{s-2}\right),\text{ for all }n\geq 1,s\geq 0,\text{ and},\\
&(3)\; \alpha''_{n,s}=2^{n-2s-1}\binom{n-s-1}{s}\text{ for all }n\geq 1,s\geq 0.
\end{align*}
\end{lema}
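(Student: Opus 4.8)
The plan is to treat $s$ as the active index and pass to the ordinary generating function $A_n(y):=\sum_{s\ge 0}\alpha_{n,s}\,y^s$ (and analogously $A_n''(y):=\sum_{s\ge 0}\alpha''_{n,s}\,y^s$), since the factor $\binom{j}{s}$ is precisely what the binomial theorem $\sum_{s}\binom{j}{s}y^s=(1+y)^j$ collapses. Before starting I would record a structural reduction that cuts the work from three identities to essentially one: writing $\binom{n}{2j-1}=\binom{n}{2(j-1)+1}$, re-indexing, and applying Pascal's rule $\binom{j+1}{s}=\binom{j}{s}+\binom{j}{s-1}$ gives $\alpha'_{n,s}=\alpha''_{n,s}+\alpha''_{n,s-1}$, so (2) will follow from (3) by a short simplification. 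Throughout I use the convention $\binom{m}{k}=0$ whenever $k<0$ or $0\le m<k$, which makes the stray boundary terms (such as the $\binom{n-s-1}{s-1}$ occurring for small $n$) vanish automatically.

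For (1) and (3) I would first interchange the order of summation to obtain
$$A_n(y)=\sum_{j\ge 0}\binom{n}{2j}(1+y)^j,\qquad A_n''(y)=\sum_{j\ge 0}\binom{n}{2j+1}(1+y)^j.$$
Setting $u=\sqrt{1+y}$ and using the even/odd parts of the binomial theorem, $\sum_j\binom{n}{2j}u^{2j}=\tfrac12\big((1+u)^n+(1-u)^n\big)$ and $\sum_j\binom{n}{2j+1}u^{2j+1}=\tfrac12\big((1+u)^n-(1-u)^n\big)$, I get the closed forms
$$A_n(y)=\frac{(1+u)^n+(1-u)^n}{2},\qquad A_n''(y)=\frac{(1+u)^n-(1-u)^n}{2u}.$$
It then remains to check that the generating functions of the proposed right-hand sides agree with these.

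The main technical step is to identify the generating function of the Fibonacci-type coefficients. Set $G_m(x):=\sum_{s\ge 0}\binom{m-s}{s}x^s$; Pascal's rule yields the recurrence $G_m=G_{m-1}+xG_{m-2}$ with $G_0=G_1=1$, whose Binet solution is $G_m=\frac{\beta^{m+1}-\gamma^{m+1}}{\beta-\gamma}$, where $\beta,\gamma=\frac{1\pm\sqrt{1+4x}}{2}$ are the roots of $t^2-t-x=0$. Specializing $x=y/4$ makes $\sqrt{1+4x}=u$, $\beta=\frac{1+u}{2}$, $\gamma=\frac{1-u}{2}$, $\beta-\gamma=u$ and $\beta\gamma=-y/4$. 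For (3), the right-hand side has generating function $2^{n-1}G_{n-1}(y/4)=2^{n-1}\frac{\beta^{n}-\gamma^{n}}{u}=\frac{(1+u)^n-(1-u)^n}{2u}=A_n''(y)$, which proves (3). For (1), the right-hand side has generating function $2^{n-1}\big(G_n(y/4)+\tfrac{y}{4}G_{n-2}(y/4)\big)$; using $\tfrac{y}{4}=-\beta\gamma$ to rewrite $\tfrac{y}{4}\beta^{n-1}=-\gamma\beta^n$ and $\tfrac{y}{4}\gamma^{n-1}=-\beta\gamma^n$, the bracketed numerator becomes $(\beta-\gamma)(\beta^n+\gamma^n)=u(\beta^n+\gamma^n)$, so $G_n(y/4)+\tfrac{y}{4}G_{n-2}(y/4)=\beta^n+\gamma^n$ and the whole expression equals $2^{n-1}(\beta^n+\gamma^n)=A_n(y)$, proving (1). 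Comparing coefficients of $y^s$ then delivers both identities. I expect the only delicate point to be this algebraic matching (and keeping the Binet formula valid at the boundary, where $G_{-1}=0$ correctly annihilates the stray term); the reductions and the binomial-theorem steps are routine.

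Finally, (2) follows by substituting the now-established formula (3) into the relation above, giving $\alpha'_{n,s}=2^{n-2s-1}\binom{n-s-1}{s}+2^{n-2s+1}\binom{n-s}{s-1}$, and then checking via two applications of Pascal's rule that $\binom{n-s-1}{s}+4\binom{n-s}{s-1}=\binom{n+1-s}{s}+2\binom{n-s}{s-1}+\binom{n-s-1}{s-2}$, which is exactly the claimed expression.
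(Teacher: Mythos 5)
Your proposal is correct, but it takes a genuinely different route from the paper. The paper's proof of Lemma \ref{lem1} is a double induction: it checks the base cases $n=1$ (all $s$) and $s=0$ (all $n$) directly, derives from Pascal's rule the coupled recurrences $\alpha_{n+1,s}=\alpha_{n,s}+\alpha'_{n,s}$ and $\alpha'_{n+1,s}=\alpha'_{n,s}+\alpha_{n,s}+\alpha_{n,s-1}$, and verifies by simultaneous induction that the closed formulas (1) and (2) satisfy them; part (3) is then obtained from the separate recurrence $\alpha''_{n,s}=\alpha''_{n-1,s}+\alpha_{n-1,s}$ together with the already-proved (1). You instead pass to generating functions in $s$: the binomial theorem collapses the $\binom{j}{s}$, the even/odd parts of $(1\pm u)^n$ with $u=\sqrt{1+y}$ give closed forms for $A_n$ and $A_n''$, and the right-hand sides are recognized as Fibonacci polynomials $G_m(y/4)$ evaluated by Binet's formula; moreover your identity $\alpha'_{n,s}=\alpha''_{n,s}+\alpha''_{n,s-1}$ makes (2) a corollary of (3), reversing the paper's dependency structure, where (3) depends on (1). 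Your algebraic steps all check out (the key manipulations $G_n+\tfrac{y}{4}G_{n-2}=\beta^n+\gamma^n$ and $2^{n-1}G_{n-1}=A_n''$ are correct, and since every sum involved is a polynomial in $y$, proving the identities for all real $y>0$, where $\beta\neq\gamma$, is legitimate). What each approach buys: yours is more conceptual and explains where the formulas come from---they are coefficients of Fibonacci-type polynomials---so one could discover them this way, while the paper's induction requires guessing the answers in advance but uses nothing beyond Pascal's rule.

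One shared blemish worth flagging: for the stated identities to hold for \emph{all} $s\geq 0$ (e.g.\ (1) at $(n,s)=(1,1)$, where $\alpha_{1,1}=0$ but the formula produces $\tfrac14\binom{-1}{0}$), one needs the convention $\binom{m}{k}=0$ whenever $k>m$, \emph{including} negative $m$, so in particular $\binom{-1}{0}=0$. Your stated convention ($k<0$ or $0\le m<k$) leaves the case $m<0\le k$ uncovered, although your remark that $G_{-1}=0$ shows you intend exactly this; note the paper is no better, as it silently needs the same convention here while using $\binom{-1}{0}=1$ in Lemma \ref{cheie}. This is a matter of stating conventions, not a gap in your argument.
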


\begin{proof}
First, note that from \eqref{alfa}, we have $\alpha_{1,0}=1$ and $\alpha_{1,s}=0$ for $s\geq 1$.
Also, we have $\alpha'_{1,0}=\alpha'_{1,1}=1$ and $\alpha'_{1,s}=0$ for $s\geq 1$. It follows that the required
formulas hold for $n=1$ and $s\geq 0$. On the other hand, since $\sum_{j=0}^n \binom{n}{2j} = \sum_{j=0}^n \binom{n}{2j-1} = 2^{n-1}$,
it follows that $\alpha_{n,0}=\alpha'_{n,0}=2^{n-1}$ and, thus, the required formulas hold for $n\geq 1$ and $s=0$.

Let $n\geq 1$. From \eqref{alfa} it follows that
\begin{equation}\label{rec1}
\alpha_{n+1,s}=\sum_{j\geq 0} \binom{n+1}{2j}\binom{j}{s} = \sum_{j\geq 0} \binom{n}{2j}\binom{j}{s}+
\sum_{j\geq 0}\binom{n}{2j-1}\binom{j}{s} = \alpha_{n,s}+\alpha'_{n,s}.
\end{equation}
Assume $s\geq 1$. From \eqref{alfa} it follows also that
\begin{equation}\label{rec2}
\alpha'_{n+1,s} 
= \sum_{j\geq 0}\binom{n}{2j-1}\binom{j}{s}
+ \sum_{j\geq 0}\binom{n}{2j-2}\binom{j}{s} = \alpha'_{n,s}+\sum_{j\geq 0}\binom{n}{2j}\binom{j+1}{s}.
\end{equation}
On the other hand, we have
\begin{equation}\label{rec3}
\sum_{j\geq 0}\binom{n}{2j}\binom{j+1}{s} = \sum_{j\geq 0}\binom{n}{2j}\binom{j}{s} + \sum_{j\geq 0}\binom{n}{2j}\binom{j+1}{s-1}
= \alpha_{n,s}+\alpha_{n,s-1}.
\end{equation}
From \eqref{rec1}, \eqref{rec2} and \eqref{rec3} it follows that $\alpha_{n,s}$'s and $\alpha'_{n,s}$'s satisfy the recurrence 
relations:
\begin{equation}\label{rec}
\begin{cases} \alpha_{n+1,s} = \alpha_{n,s} + \alpha'_{n,s},\text{ for all }n\geq 1,s\geq 0, \\ 
              \alpha'_{n+1,s} = \alpha'_{n,s} + \alpha_{n,s}+\alpha_{n,s-1}\text{ for all }n\geq 1,s\geq 1 \end{cases}.
\end{equation}
Hence, we can apply an inductive argument in order to complete the proof, as the required formulas hold for $n=1$ and $s\geq 0$, and,
also, for $n\geq 1$ and $s=0$. Assume $n\geq 1$ and $s\geq 1$ and the formulas hold for the pairs $(n,s)$ and $(n,s-1)$. 
From \eqref{rec} it follows that
$$\alpha_{n+1,s}=  2^{n-2s-1}\left( \binom{n-s}{s}+\binom{n-s-1}{s-1} + \binom{n+1-s}{s}+2\binom{n-s}{s-1}+\binom{n-s-1}{s-2} \right).$$
Since $\binom{n-s}{s}+\binom{n-s}{s-1}=\binom{n+1-s}{s}$ and $\binom{n-s-1}{s-1}+\binom{n-s-1}{s-2}=\binom{n-s}{s-1}$, it follows that
$$\alpha_{n+1,s} = 2^{n-2s-1}\left( 2\binom{n+1-s}{s} + 2\binom{n-s}{s-1} \right) = 2^{n+1-2s-1}\left( \binom{n+1-s}{s} + \binom{n-s}{s-1}
\right),$$
as required. Also, from \eqref{rec} it follows that \small
$$\alpha'_{n+1,s} = \alpha_{n+1,s}+\alpha_{n,s-1} = 2^{n-2s-1} \left( 2\binom{n+1-s}{s} + 2\binom{n-s}{s-1} + 4\binom{n+1-s}{s-1}
+ 4\binom{n-s}{s-2} \right).$$ \normalsize
Since $\binom{n+1-s}{s}+\binom{n+1-s}{s-1}=\binom{n+2-s}{s}$ and $\binom{n-s}{s-1}+\binom{n-s}{s-2}=\binom{n+1-s}{s-1}$, we therefore obtain
$$\alpha'_{n+1,s}=2^{n-2s}\left( \binom{n+2-s}{s} + 2\binom{n+1-s}{s-1} + \binom{n-s}{s-2} \right),$$
as required. Note that
\begin{equation}\label{xo1}
\alpha''_{0,s}=0,\text{ for all }s\geq 0,\text{and },\alpha''_{n,0}=\sum_{j\geq 1}\binom{n}{2j+1} = 2^{n-1},\text{ for all }n\geq 0.
\end{equation}
Assume $n,s\geq 1$. Since $\binom{n}{2j+1}=\binom{n-1}{2j}+\binom{n-1}{2j+1}$, from \eqref{alfa} if follows that
\begin{equation}\label{xo2}
\alpha''_{n,s}=\alpha''_{n-1,s}+\alpha_{n-1,s},\text{ where }\alpha_{0,s}=0.
\end{equation}
We claim that 
\begin{equation}\label{clem}
\alpha''_{n,s}=2^{n-2s-1}\binom{n-s-1}{s}\text{ for all }n,s\geq 0.
\end{equation}
From \eqref{xo1} it follows that \eqref{clem} holds for $n=0$ or $s=0$. Assume $n,s\geq 1$.
In order to prove \eqref{clem}, using \eqref{xo2}, it suffices to show that
$$\alpha_{n-1,s}=2^{n-2s-1}\binom{n-s-1}{s} - 2^{n-2s-2}\binom{n-s-2}{s},$$
which follows immediately from (1) and the identity $\binom{n-s-1}{s}-\binom{n-s-2}{s}=\binom{n-s-2}{s-1}$. Hence, the proof is complete.
\end{proof}


\begin{lema}\label{lem2}
For all integers $0\leq s\leq k$, we have that:
$$\sum_{j=1}^s \binom{2k}{2j-1}\binom{k-j}{s-j} = 2^{2s-1}\binom{k+s-1}{2s-1}.$$
\end{lema}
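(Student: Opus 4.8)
The plan is to recognize the left-hand side as one of the sums $\alpha''$ already evaluated in Lemma \ref{lem1}(3), after a single change of summation index, and then simply to read off the value from that formula. No new generating-function computation should be needed. Concretely, I would start from Lemma \ref{lem1}(3) applied with $n=2k$ and second parameter $k-s$, namely
$$\alpha''_{2k,k-s} = \sum_{i\geq 0}\binom{2k}{2i+1}\binom{i}{k-s} = 2^{2k-2(k-s)-1}\binom{2k-(k-s)-1}{k-s} = 2^{2s-1}\binom{k+s-1}{k-s},$$
and observe that $\binom{k+s-1}{k-s}=\binom{k+s-1}{2s-1}$, so the evaluated right-hand side is already exactly the quantity we want. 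It then remains only to identify the sum defining $\alpha''_{2k,k-s}$ with the sum in the statement.

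The key step, and the only place requiring care, is to match these two sums. In $\alpha''_{2k,k-s}=\sum_{i\geq0}\binom{2k}{2i+1}\binom{i}{k-s}$ I would substitute $i=k-j$. Since $\binom{2k}{2i+1}=\binom{2k}{2(k-j)+1}=\binom{2k}{2k-2j+1}=\binom{2k}{2j-1}$ by the symmetry $\binom{2k}{m}=\binom{2k}{2k-m}$, and since $\binom{i}{k-s}=\binom{k-j}{k-s}=\binom{k-j}{s-j}$ by the symmetry $\binom{m}{r}=\binom{m}{m-r}$, the general term of $\alpha''_{2k,k-s}$ becomes precisely $\binom{2k}{2j-1}\binom{k-j}{s-j}$. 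I would then verify that the index ranges agree: the conditions $\binom{2k}{2i+1}\neq 0$ and $\binom{i}{k-s}\neq 0$ force $0\leq i\leq k-1$ and $i\geq k-s$, which under $i=k-j$ become $j\geq 1$ and $j\leq s$; that is exactly the range $j=1,\dots,s$ appearing in the statement.

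Combining the two steps yields $\sum_{j=1}^s\binom{2k}{2j-1}\binom{k-j}{s-j}=\alpha''_{2k,k-s}=2^{2s-1}\binom{k+s-1}{2s-1}$, as claimed. I would close by checking the degenerate cases directly: for $s=0$ both sides vanish, since the sum is empty and $\binom{k-1}{-1}=0$; and for $s=k$ the relevant parameter is $k-s=0$, where Lemma \ref{lem1}(3) still applies and both sides reduce to $2^{2k-1}$ (the sum of the odd-indexed entries of row $2k$). I do not expect any genuine obstacle here: the whole content is the bookkeeping of the reindexing $i=k-j$ together with the two binomial symmetries, after which the identity is immediate from Lemma \ref{lem1}(3). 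The main thing to get right is therefore the alignment of the summation ranges, not any substantive estimate.
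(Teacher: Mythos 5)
Your proof is correct and is essentially the paper's own argument: the paper likewise substitutes $u=k-j$ to identify the left-hand side with $\alpha''_{2k,k-s}$ and then invokes Lemma \ref{lem1}(3). You merely make explicit the binomial symmetries, the index-range bookkeeping, and the degenerate cases that the paper leaves implicit.
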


\begin{proof}
We denote $u=k-j$. It follows that
$$\sum_{j=1}^s \binom{2k}{2j-1}\binom{k-j}{s-j} = \sum_{u\geq 0} \binom{2k}{2u+1}\binom{u}{k-s}.$$
Hence, the conclusion follows from Lemma \ref{lem1}(3).
\end{proof}

\begin{lema}\label{cooc}
Let $\ell\geq 0$ be an integer. For any integer $t\geq 1$, we have that:
$$ \sum_{s=0}^t (-1)^s \left(\binom{2\ell+s}{s} + \binom{2\ell+s-1}{s-1} \right)\binom{2t+2\ell}{t-s} = 0.$$
\end{lema}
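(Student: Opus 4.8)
The plan is to collapse the entire alternating sum into the coefficient of a single monomial in an explicit polynomial, via ordinary generating functions and coefficient extraction; this turns the identity into the trivial symmetry of one binomial coefficient.

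First I would rewrite the weight in upper-index form. Since $\binom{2\ell+s}{s}=\binom{2\ell+s}{2\ell}$ and $\binom{2\ell+s-1}{s-1}=\binom{2\ell+s-1}{2\ell}$, the coefficient $c_s:=\binom{2\ell+s}{s}+\binom{2\ell+s-1}{s-1}$ has a clean ordinary generating function. Using $\sum_{s\geq0}\binom{2\ell+s}{2\ell}x^s=(1-x)^{-(2\ell+1)}$ and shifting the index by one in the second sum (the $s=0$ term of the shifted sum vanishes, consistently), I would obtain
$$G(x):=\sum_{s\geq 0} c_s\, x^s = \frac{1}{(1-x)^{2\ell+1}}+\frac{x}{(1-x)^{2\ell+1}}=\frac{1+x}{(1-x)^{2\ell+1}}.$$

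Next I would encode the remaining factor as a coefficient, writing $\binom{2t+2\ell}{t-s}=[y^{t-s}](1+y)^{2t+2\ell}$. This automatically vanishes for $s>t$, so the restricted sum over $0\le s\le t$ equals the full sum over $s\ge0$, and the whole expression becomes $[y^{t}]\bigl(\sum_{s\ge0}c_s(-1)^s y^s\bigr)(1+y)^{2t+2\ell}=[y^{t}]\,G(-y)\,(1+y)^{2t+2\ell}$. Substituting $G(-y)=\dfrac{1-y}{(1+y)^{2\ell+1}}$ and cancelling the factor $(1+y)^{2\ell+1}$ (legitimate in the ring of formal power series, and here the result is an honest polynomial since $t\ge1$) reduces the sum to
$$[y^{t}]\,(1-y)(1+y)^{2t-1}.$$

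Finally I would read off the coefficient by splitting $(1-y)(1+y)^{2t-1}=(1+y)^{2t-1}-y(1+y)^{2t-1}$, which gives
$$[y^{t}]\,(1-y)(1+y)^{2t-1}=\binom{2t-1}{t}-\binom{2t-1}{t-1}=0,$$
the last equality being the symmetry $\binom{2t-1}{t}=\binom{2t-1}{t-1}$. The only place demanding real care is the computation of $G(-y)$ together with the exponent bookkeeping $2t+2\ell-(2\ell+1)=2t-1$ in the cancellation; once that algebra is verified, the identity collapses to a self-evident binomial symmetry, so I do not anticipate a substantive obstacle. (As a sanity check, the independence of the answer from $\ell$ is already visible in the step where $(1+y)^{2\ell+1}$ drops out.)
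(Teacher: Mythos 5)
Your proof is correct, and at bottom it is the paper's own argument translated into generating-function language. The paper rewrites $(-1)^s\binom{2\ell+s}{s}=\binom{-2\ell-1}{s}$ and applies the Chu--Vandermonde summation separately to the two pieces of the sum, obtaining $\binom{2t-1}{t}$ and $-\binom{2t-1}{t-1}$, then finishes with the symmetry $\binom{2t-1}{t}=\binom{2t-1}{t-1}$ --- exactly your last step. The correspondence is precise: the negation identity is your expansion $(1+y)^{-(2\ell+1)}=\sum_{s\geq 0}(-1)^s\binom{2\ell+s}{s}y^s$, and Chu--Vandermonde with upper index $-2\ell-1$ is exactly your coefficient extraction $[y^t]\,(1+y)^{-2\ell-1}(1+y)^{2t+2\ell}=[y^t]\,(1+y)^{2t-1}$, which is where your factor $(1+y)^{2\ell+1}$ cancels. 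What your packaging buys: folding both terms into the single series $G(x)=(1+x)/(1-x)^{2\ell+1}$ removes the separate index shift $u=s-1$ and its sign bookkeeping (which is in fact garbled in the paper's display \eqref{chu2}, whose right-hand side should read $-\binom{2t-1}{t-1}$ for the lemma's conclusion to follow), it requires no appeal to a quoted summation theorem, and the cancellation of $(1+y)^{2\ell+1}$ makes the independence of the answer from $\ell$ visible at a glance. What the paper's version buys is brevity, since the two convolution evaluations are cited rather than derived. Note that both arguments consume the hypothesis $t\geq 1$ at the same spot: the closing symmetry fails at $t=0$, where the sum equals $1$, consistent with your $[y^0]\,(1-y)(1+y)^{-1}=1$.
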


\begin{proof}
Using the identity $(-1)^k\binom{x}{k}=\binom{-x+k-1}{k}$ and the Chu-Vandermonde summation (see \cite{Rior}), we get:
\begin{equation}\label{chu1}
\sum_{s=0}^t (-1)^s \binom{2\ell+s}{s} \binom{2t+2\ell}{t-s} = 
\sum_{s=0}^t \binom{-2\ell-1}{s} \binom{2t+2\ell}{t-s} = \binom{2t-1}{t}.
\end{equation}
Similarly, by denoting $u=s-1$, we have:
\begin{equation}\label{chu2}
\sum_{s=0}^t (-1)^s \binom{2\ell+s-1}{s-1} \binom{2t+2\ell}{t-s} = -
\sum_{u=0}^{t-1} \binom{2\ell+u}{u} \binom{2t+2\ell}{t-1-u} = \binom{2t-1}{t-1}.
\end{equation}
Since $t\geq 1$, the conclusion follows from \eqref{chu1} and \eqref{chu2}.
\end{proof}

\begin{lema}\label{cheie}
For all integers $n,\ell\geq 0$, we have that:
$$ \sum_{k=0}^{\ell} \left( \binom{n+2k-1}{k} - \binom{n+2k-1}{k-1} \right)\binom{2(\ell-k)}{\ell-k} = \binom{n+2\ell}{\ell}.$$
\end{lema}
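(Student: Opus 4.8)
The plan is to prove this by generating functions, recognizing the left-hand side as a Cauchy product. Write $C(x)=\frac{1-\sqrt{1-4x}}{2x}$ for the Catalan generating function, so that $C=1+xC^2$ and $\sum_{m\geq 0}\binom{2m}{m}x^m=\frac{1}{\sqrt{1-4x}}$. I will lean on the classical identity (see \cite{Rior})
$$\sum_{k\geq 0}\binom{2k+r}{k}x^k=\frac{C(x)^r}{\sqrt{1-4x}},\quad r\geq 0,$$
together with its companion $C(x)^n=\sum_{k\geq 0}\frac{n}{2k+n}\binom{2k+n}{k}x^k$. The point is that the sum in the statement is a convolution in the index $k$, so I only need to identify its two factors as coefficient sequences of known power series and then read off the coefficient of $x^\ell$ in the product.

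First I would simplify the inner coefficient. Since $n+2k-1=2(k-1)+(n+1)$, the ballot-type reduction $\binom{m}{k}-\binom{m}{k-1}=\frac{m-2k+1}{m-k+1}\binom{m}{k}$ with $m=n+2k-1$ gives
$$\binom{n+2k-1}{k}-\binom{n+2k-1}{k-1}=\frac{n}{n+k}\binom{n+2k-1}{k}.$$
A short factorial computation shows this equals $\frac{n}{2k+n}\binom{2k+n}{k}$, which is exactly $[x^k]C(x)^n$. Hence the sequence $c_k:=\binom{n+2k-1}{k}-\binom{n+2k-1}{k-1}$ has generating function $C(x)^n$ (valid for all $n\geq 0$, with $C^0=1$ covering the degenerate case).

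Then the left-hand side $\sum_{k=0}^{\ell}c_k\binom{2(\ell-k)}{\ell-k}$ is precisely the coefficient of $x^\ell$ in
$$\left(\sum_{k\geq 0}c_kx^k\right)\left(\sum_{m\geq 0}\binom{2m}{m}x^m\right)=C(x)^n\cdot\frac{1}{\sqrt{1-4x}}=\frac{C(x)^n}{\sqrt{1-4x}},$$
which by the first displayed identity with $r=n$ is $\sum_{\ell\geq 0}\binom{2\ell+n}{\ell}x^\ell$. Comparing coefficients of $x^\ell$ yields $\binom{n+2\ell}{\ell}$, as claimed.

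The main obstacle is justifying the two generating-function facts I invoke. If I want the argument self-contained rather than quoting \cite{Rior}, the cleanest route is to prove $\sum_k c_kx^k=C^n$ directly: one writes $\sum_k\binom{n+2k-1}{k}x^k=\frac{C^{n-1}}{\sqrt{1-4x}}$ and $\sum_k\binom{n+2k-1}{k-1}x^k=x\frac{C^{n+1}}{\sqrt{1-4x}}$, so that their difference is $\frac{C^{n-1}}{\sqrt{1-4x}}(1-xC^2)=\frac{C^{n-1}(2-C)}{\sqrt{1-4x}}$, using $xC^2=C-1$. The remaining step, and the real computational heart, is the algebraic identity $2-C=C\sqrt{1-4x}$, equivalently $C=\frac{2}{1+\sqrt{1-4x}}$, which collapses the difference to $C^n$. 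Everything else is routine bookkeeping with formal power series.
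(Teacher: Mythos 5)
Your proof is correct, but it takes a genuinely different route from the paper's. The paper proceeds by double induction: writing $S_{n,\ell}$ for the left-hand side, two applications of Pascal's rule yield the recurrence $S_{n,\ell}=S_{n-1,\ell}+S_{n+1,\ell-1}$; the boundary cases $n=0$ (where only the $k=0$ term survives, because $\binom{2k-1}{k}=\binom{2k-1}{k-1}$ for $k\geq 1$) and $\ell=0$ are checked directly, and Pascal's rule closes the induction. You instead recognize the sum as a Cauchy product: the ballot-type reduction $\binom{n+2k-1}{k}-\binom{n+2k-1}{k-1}=\frac{n}{n+k}\binom{n+2k-1}{k}=\frac{n}{2k+n}\binom{2k+n}{k}=[x^k]C(x)^n$ (which is correct, including the degenerate case $n=0$ covered by $C^0=1$), the central binomial series $(1-4x)^{-1/2}$, and the classical expansion $\frac{C(x)^r}{\sqrt{1-4x}}=\sum_{k}\binom{2k+r}{k}x^k$ together reduce the identity to reading off the coefficient of $x^\ell$ in $\frac{C(x)^n}{\sqrt{1-4x}}$; the algebra in your self-contained variant ($xC^2=C-1$, hence $1-xC^2=2-C=C\sqrt{1-4x}$) also checks out. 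As for what each approach buys: the paper's induction is entirely elementary and self-contained, using nothing beyond Pascal's rule, but the recurrence and boundary cases must be divined and the proof offers little insight into why the identity holds; your argument exposes the structural reason (the sum is a coefficient of $C^n\cdot(1-4x)^{-1/2}$) and generalizes at no extra cost (replacing $\binom{2m}{m}$ by $\binom{2m+r}{m}$ yields $\binom{n+r+2\ell}{\ell}$), at the price of importing classical generating-function identities---a price consistent with the paper's own practice, since the proof of Lemma \ref{cooc} quotes the Chu--Vandermonde summation from the same reference \cite{Rior}.
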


\begin{proof}
We use induction on $n,\ell\geq 0$. 

Assume $n=0$ and $\ell\geq 0$. Since $\binom{-1}{0}=1$, $\binom{-1}{-1}=0$ and $\binom{2k-1}{k}=\binom{2k-1}{k-1}$ for all $k\geq 1$, it follows that
$$ \sum_{k=0}^{\ell} \left( \binom{2k-1}{k} - \binom{2k-1}{k-1} \right)\binom{2(\ell-k)}{\ell-k} = \binom{2\ell}{\ell}, $$
as required. Assume $n\geq 1$ and $\ell=0$. Since $\binom{n}{0}=\binom{n-1}{0}=\binom{0}{0}=1$ and $\binom{n-1}{-1}=0$, the assertion is trivial.

Now, assume $n,\ell\geq 1$. Since $\binom{n+2k-1}{k}=\binom{n+2k-2}{k}+\binom{n+2k-1}{k-1}$ and 
$\binom{n+2k-1}{k-1}=\binom{n+2k-2}{k-1}+\binom{n+2k-1}{k-2}$, it follows that
\begin{align*}
 S_{n,\ell}:&=\sum_{k=0}^{\ell} \left( \binom{n+2k-1}{k} - \binom{n+2k-1}{k-1} \right)\binom{2(\ell-k)}{\ell-k} = \\
 & = \sum_{k=0}^{\ell} \left( \binom{n+2k-2}{k} - \binom{n+2k-2}{k-1} \right)\binom{2(\ell-k)}{\ell-k} + \\
 & +  \sum_{k=0}^{\ell} \left( \binom{n+2k-2}{k-1} - \binom{n+2k-2}{k-2} \right)\binom{2(\ell-k)}{\ell-k} = \\
 & = S_{n-1,\ell} + \sum_{k=1}^{\ell} \left( \binom{n+2k-2}{k-1} - \binom{n+2k-2}{k-2} \right)\binom{2(\ell-k)}{\ell-k}.
\end{align*}
Denoting $s=k-1$, it follows that
$$ S_{n,\ell} = S_{n-1,\ell} + \sum_{s=0}^{\ell-1} \left( \binom{n+2s}{s} - \binom{n+2s}{s-1} \right)\binom{2(\ell-1-s)}{\ell-1-s}
= S_{n-1,\ell}+S_{n+1,\ell-1}.$$
Using the induction hypothesis on $n$ and $\ell$, it follows that
$$ S_{n,\ell} = \binom{n-1+2\ell}{\ell}+\binom{n+1+2\ell-2}{\ell-1} = \binom{n+2\ell}{\ell},$$
as required. Hence, the proof is complete.
\end{proof}

\section{Main results}

\begin{prop}\label{p1}
Let $n\geq 1$ be an integer. We have that:
\begin{align*}
& \cos(nt) = \sum_{s=0}^{\npd} (-1)^s 2^{n-2s-1}\left(\binom{n-s}{s}+\binom{n-s-1}{s-1}\right) \cdot \cos^{n-2s}(t),\text{ for all }t\in\mathbb R.
\end{align*}
\end{prop}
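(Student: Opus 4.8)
The plan is to start from De Moivre's formula and convert a sine-power expansion into the desired cosine-power expansion. Writing $\cos(nt)=\operatorname{Re}(\cos t+i\sin t)^n$ and expanding by the binomial theorem, the factor $i^m$ attached to the $m$-th term is real precisely when $m=2j$ is even, in which case $i^{2j}=(-1)^j$. Hence only the even-index terms survive in the real part, and
$$\cos(nt)=\sum_{j\geq 0}(-1)^j\binom{n}{2j}\cos^{n-2j}(t)\sin^{2j}(t).$$
First I would record this identity; it is the only piece of genuine trigonometry the argument needs, everything afterwards being algebra.

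Next I would eliminate the sines via $\sin^2 t=1-\cos^2 t$ and expand $(1-\cos^2 t)^j=\sum_{s=0}^j(-1)^s\binom{j}{s}\cos^{2s}(t)$ by the binomial theorem. Substituting produces the double sum
$$\cos(nt)=\sum_{j\geq 0}\sum_{s=0}^j(-1)^{j+s}\binom{n}{2j}\binom{j}{s}\cos^{n-2(j-s)}(t).$$
The decisive step is to reindex by the exponent of $\cos t$: fixing $r=j-s$ and collecting all contributions to $\cos^{n-2r}(t)$. Using $(-1)^{j+s}=(-1)^{2j-r}=(-1)^r$ and $\binom{j}{s}=\binom{j}{j-r}=\binom{j}{r}$, the inner summation over $j$ collapses to $\sum_{j\geq 0}\binom{n}{2j}\binom{j}{r}=\alpha_{n,r}$, exactly the quantity introduced in \eqref{alfa} (note that $\binom{j}{r}=0$ for $j<r$, so the surviving range $j\geq r$ matches the definition).

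This yields $\cos(nt)=\sum_{r\geq 0}(-1)^r\alpha_{n,r}\cos^{n-2r}(t)$, and applying Lemma \ref{lem1}(1) to rewrite $\alpha_{n,r}$ as $2^{n-2r-1}\bigl(\binom{n-r}{r}+\binom{n-r-1}{r-1}\bigr)$ gives the stated formula; the upper limit $\npd$ is automatic, since $\alpha_{n,r}=0$ for $r>\npd$ (equivalently, the exponent $n-2r$ must remain nonnegative). I expect the main obstacle to be purely bookkeeping: tracking the two independent signs $(-1)^j$ and $(-1)^s$ through the substitution $\sin^2 t=1-\cos^2 t$, and confirming that the reindexed inner sum is precisely $\alpha_{n,r}$ rather than one of the shifted variants $\alpha'_{n,r}$ or $\alpha''_{n,r}$. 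Once that identification is pinned down, Lemma \ref{lem1} finishes the argument and no analytic estimate is required.
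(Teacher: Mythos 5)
Your proposal is correct and follows essentially the same route as the paper's own proof: De Moivre's formula, extraction of the real part, substitution of $\sin^2 t = 1-\cos^2 t$, reindexing by the exponent of $\cos t$ to recognize the coefficient as $\alpha_{n,s}$ from \eqref{alfa}, and then Lemma \ref{lem1}(1). The only differences are notational (your indices $s,r$ play the roles of the paper's $\ell,s$), and your sign and symmetry checks $(-1)^{j+s}=(-1)^r$, $\binom{j}{j-r}=\binom{j}{r}$ match the paper's steps \eqref{ec4}--\eqref{ec5} exactly.
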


\begin{proof}
We consider the Moivre identity
\begin{equation}\label{ec1}
e^{int}=\cos(nt)+i\sin(nt)=(\cos t+i\sin t)^n=(e^{it})^n.
\end{equation}
From \eqref{ec1} it follows that
\begin{equation}\label{ec2}
\cos(nt)=\sum_{j=0}^{\npd}(-1)^j \binom{n}{2j} \cos^{n-2j}(t)\sin^{2j}(t).
\end{equation}
Since $\sin^2(t)=1-\cos^2(t)$, from \eqref{ec2} it follows that
\begin{equation}\label{ec3}
\cos(nt)=\sum_{j=0}^{\npd}(-1)^j \binom{n}{2j} \cos^{n-2j}(t) \sum_{\ell=0}^j (-1)^{\ell}\binom{j}{\ell} \cos^{2\ell}(t).
\end{equation}
Denoting $s=j-\ell$ in \eqref{ec2}, we obtain
\begin{equation}\label{ec4}
\cos(nt)=\sum_{s=0}^{\npd}\left((-1)^s \sum_{j=s}^{\npd} \binom{n}{2j}\binom{j}{j-s}\right) \cos^{n-2s}(t).
\end{equation}
On the other hand, we have:
\begin{equation}\label{ec5}
\sum_{j=s}^{\npd} \binom{n}{2j}\binom{j}{j-s} = \sum_{j=0}^n \binom{n}{2j}\binom{j}{s}.
\end{equation}
Now, the conclusion follows from Lemma \ref{lem1}, \eqref{ec4} and \eqref{ec5}.
\end{proof}

\begin{exm}\rm
(1) 
    From Proposition \ref{p1} it follows that $\cos(2t) = 2\cos^2 t - 1.$		
		
(2) 
    From Proposition \ref{p1}, it follows that
    $\cos(4t)=8\cos^4(t)-8\cos^2(t)+1.$
\end{exm}

\begin{prop}\label{p2}
\begin{enumerate}
\item[(1)] Let $k\geq 0$ be an integer. Then:
           $$\sin((2k+1)t) = \sum_{s=0}^{k} (-4)^{k-s}\left(\binom{2k+1-s}{s}+\binom{2k-s}{s-1}\right) \cdot \sin^{2k+1-2s}(t),\text{ for all }t\in\mathbb R.$$
\item[(2)] Let $k\geq 0$ be an integer. Then:
           $$\sin(2kt) = \cos(t)\sum_{s=1}^{k} (-1)^{s-1}\cdot 2^{2s-1}\binom{k+s-1}{2s-1}
					    \cdot \sin^{2s-1}(t),\text{ for all }t\in\mathbb R.$$
\end{enumerate}
\end{prop}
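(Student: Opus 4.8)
The plan is to imitate the proof of Proposition \ref{p1}, replacing the real part of de Moivre's identity by its imaginary part. Taking imaginary parts in $e^{int}=(\cos t+i\sin t)^n$ and using $i^{2j+1}=(-1)^j i$, one obtains the basic expansion
$$\sin(nt)=\sum_{j\geq 0}(-1)^j\binom{n}{2j+1}\cos^{n-2j-1}(t)\sin^{2j+1}(t).$$
The two cases of the proposition then differ only in the parity of the exponent $n-2j-1$ of $\cos(t)$, and this is the single structural decision that drives the whole argument.

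For part (1) I would set $n=2k+1$, so that $n-2j-1=2(k-j)$ is even and $\cos^{2(k-j)}(t)=(1-\sin^2 t)^{k-j}$. After expanding $(1-\sin^2 t)^{k-j}=\sum_{\ell}(-1)^\ell\binom{k-j}{\ell}\sin^{2\ell}(t)$, every term is a power of $\sin(t)$, and I would collect the coefficient of $\sin^{2k+1-2s}(t)$ by imposing $j+\ell=k-s$. Writing $\ell=k-s-j$ turns $\binom{k-j}{\ell}$ into $\binom{k-j}{s}$, and the substitution $u=k-j$ (which also sends $\binom{2k+1}{2j+1}$ to $\binom{2k+1}{2u}$) rewrites the inner sum as $\sum_{u\geq 0}\binom{2k+1}{2u}\binom{u}{s}=\alpha_{2k+1,s}$. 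Feeding Lemma \ref{lem1}(1) gives $\alpha_{2k+1,s}=2^{2(k-s)}\left(\binom{2k+1-s}{s}+\binom{2k-s}{s-1}\right)$, and combining this with the sign $(-1)^{k-s}$ yields exactly the factor $(-4)^{k-s}$ in the claimed formula.

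For part (2) I would set $n=2k$, so that $n-2j-1$ is now odd; the remedy is to peel off a single factor and write $\cos^{2k-2j-1}(t)=\cos(t)\,(1-\sin^2 t)^{k-1-j}$. This produces the global factor $\cos(t)$ that appears in the statement, while the remaining sum is handled as before: expanding $(1-\sin^2 t)^{k-1-j}$ and collecting the coefficient of $\sin^{2s-1}(t)$ via $j+\ell=s-1$ reduces matters to $\sum_{j}\binom{2k}{2j+1}\binom{k-1-j}{s-1-j}$. The index shift $j\mapsto j+1$ transforms this into $\sum_{j=1}^{s}\binom{2k}{2j-1}\binom{k-j}{s-j}$, which is precisely the left-hand side evaluated in Lemma \ref{lem2}; applying that lemma delivers the coefficient $(-1)^{s-1}2^{2s-1}\binom{k+s-1}{2s-1}$.

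The genuinely routine parts are the binomial expansions and the de Moivre step. The part that requires care — and which I expect to be the main obstacle — is the bookkeeping in the re-indexing: one must track the parity-dependent extraction of $\cos(t)$, keep the sign $(-1)^{j+\ell}$ correct under the collapse to a single index, and choose the substitutions ($u=k-j$ in (1), $j\mapsto j+1$ in (2)) so that the inner sums land exactly on $\alpha_{2k+1,s}$ and on the summation of Lemma \ref{lem2}. Once the lemmata of Section 2 are in place, however, there is no further analytic or combinatorial difficulty, and both identities close by a direct match of coefficients.
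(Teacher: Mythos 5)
Your proposal is correct, and it splits naturally into two comparisons. For part (2) you follow essentially the same route as the paper: the imaginary part of de Moivre's identity, peeling off one factor of $\cos(t)$ so that $\cos^{2(k-j)}(t)=(1-\sin^2 t)^{k-j}$, collecting the coefficient of $\sin^{2s-1}(t)$, and closing with Lemma \ref{lem2}; your index shift $j\mapsto j+1$ at the end versus the paper's indexing by $\binom{2k}{2j-1}$ from the start is purely cosmetic. For part (1), however, you take a genuinely different route. The paper deduces the odd-angle sine formula from Proposition \ref{p1} in two lines, via the phase shift $\sin t=\cos(t-\frac{\pi}{2})$ together with $\cos\bigl((2k+1)(t-\frac{\pi}{2})\bigr)=(-1)^k\sin((2k+1)t)$, which is shorter and avoids redoing any double-sum manipulation. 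You instead give a direct proof parallel to that of Proposition \ref{p1}: expand the imaginary part of de Moivre, use that $2k+1-2j-1=2(k-j)$ is even, collect coefficients via $j+\ell=k-s$, and recognize the inner sum, after the symmetry substitution $u=k-j$ with $\binom{2k+1}{2j+1}=\binom{2k+1}{2u}$, as $\alpha_{2k+1,s}$, so that Lemma \ref{lem1}(1) yields the factor $(-4)^{k-s}$. I checked your bookkeeping (the sign $(-1)^{j+\ell}=(-1)^{k-s}$, the identity $\binom{k-j}{k-s-j}=\binom{k-j}{s}$, and the evaluation $\alpha_{2k+1,s}=4^{k-s}\bigl(\binom{2k+1-s}{s}+\binom{2k-s}{s-1}\bigr)$) and it is sound. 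What your version buys is uniformity — both parts flow from the same de Moivre expansion, everything stays in powers of $\sin(t)$ without invoking the cosine result, and it makes transparent that the same quantity $\alpha_{n,s}$ governs both the cosine and sine multiple-angle formulas; what it costs is length, since the paper's phase-shift argument recycles Proposition \ref{p1} wholesale.
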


\begin{proof}
(1) Since $\sin(t)=\cos(t-\frac{\pi}{2})$, the result follows from Proposition \ref{p1} and the identity:
$$\cos((2k+1)(t-\frac{\pi}{2})) = \cos((2k+1)t-k\pi-\frac{\pi}{2})=(-1)^k\cos((2k+1)t-\frac{\pi}{2})=(-1)^k\sin((2k+1)t).$$
(2) From Moivre's identity (\eqref{ec1}) it follows that 
\begin{equation}\label{ecc2}
\sin(2kt)=\sum_{j=1}^{k} (-1)^{j-1} \binom{2k}{2j-1} \cos^{2k+1-2j}(t)\sin^{2j-1}(t).
\end{equation}
Since $\cos^{2(k-j)}(t)=(1-\sin^2(t))^{k-j}$, from \eqref{ecc2} we deduce that
\begin{equation}\label{ecc3}
\sin(2kt) =\cos(t) \sum_{j=1}^{k} (-1)^{j-1} \binom{2k}{2j-1} \sum_{\ell=0}^{k-j} (-1)^{\ell} \binom{k-j}{\ell} \sin^{2j+2\ell-1}(t) = \\
\end{equation}
Denoting $s=j+\ell$, from \eqref{ecc3} it follows that
$$\sin(2kt)=\cos(t) \sum_{s=1}^k (-1)^{s-1} \sin^{2s-1}(t) \sum_{j=1}^{s} \binom{2k}{2j-1}\binom{k-j}{s-j}.$$
Hence, the required conclusion follows from Lemma \ref{lem2}.
\end{proof}

\begin{exm}
(1) According to Proposition \ref{p2}(1), we have 
$$\sin(3t)=-4\binom{3}{0}\sin^3(t)+\left(\binom{2}{1}+\binom{1}{0}\right)\sin(t)=3\sin(t)-4\sin^3(t).$$
(2) According to Proposition \ref{p2}(2), we have 
$$\sin(4t)=\cos(t)\left(2\binom{2}{1}\sin(t)-8\binom{3}{3}\sin^3(t)\right)=-4\cos(t)\sin(t)+8\cos(t)\sin^3(t).$$
\end{exm}

\begin{teor}\label{t1}
\begin{enumerate}
\item[(1)] Let $k\geq 0$ be an integer. Then:
           $$\cos^{2k}(t) = \frac{1}{2^{2k-1}} \sum_{j=0}^{k-1} \binom{2k}{j} \cos((2k-2j)t) + \frac{1}{2^{2k}}\binom{2k}{k}.$$
\item[(2)] Let $k\geq 0$ be an integer. Then:
           $$\cos^{2k+1}(t) = \frac{1}{2^{2k}} \sum_{j=0}^{k} \binom{2k+1}{j} \cos((2k+1-2j)t).$$
\end{enumerate}
\end{teor}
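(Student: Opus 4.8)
The plan is to prove Theorem~\ref{t1} by \emph{inverting} Proposition~\ref{p1}. All the identities in question are identities between polynomials in $c:=\cos t$ asserted for every real $t$; since $\cos t$ ranges over a whole interval, it suffices to verify them as polynomial identities in $c$. Thus I would take the right-hand sides of (1) and (2), replace each $\cos(mt)$ by its Proposition~\ref{p1} expansion in powers of $c$, and check that the result collapses to $c^{2k}$, respectively $c^{2k+1}$. Writing $m=2(k-j)$ in the even case and $m=2(k-j)+1$ in the odd case, and setting $\ell=k-j$, turns the right-hand sides into $\frac{1}{2^{2k-1}}\sum_{\ell\ge 1}\binom{2k}{k-\ell}\cos(2\ell t)+\frac{1}{2^{2k}}\binom{2k}{k}$ and $\frac{1}{2^{2k}}\sum_{\ell\ge 0}\binom{2k+1}{k-\ell}\cos((2\ell+1)t)$.

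Next I would collect, for each $p$, the coefficient of $c^{2p}$ (even case) or $c^{2p+1}$ (odd case). In Proposition~\ref{p1} the monomial $c^{m-2s}$ carries exponent $m-2s$, so matching $m-2s=2p$ (resp. $2p+1$) forces $s=\ell-p$; the power of two then simplifies cleanly, and the pair $\binom{m-s}{s}+\binom{m-s-1}{s-1}$ reorganizes, after substituting $\ell=p+s$, into $\binom{2p+s}{s}+\binom{2p+s-1}{s-1}$ in the even case and $\binom{2p+1+s}{s}+\binom{2p+s}{s-1}$ in the odd case, each multiplied by $\binom{2k}{k-p-s}$, respectively $\binom{2k+1}{k-p-s}$. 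For $p<k$ the resulting sum is exactly Lemma~\ref{cooc} with parameters $\ell\mapsto p$ and $t\mapsto k-p\ge 1$; the Chu--Vandermonde computation in its proof goes through verbatim for the odd shift $2p+1$ as well, so every lower coefficient vanishes. For $p=k$ only the single term $s=0$ survives, and it equals $1$, producing the leading monomial.

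The one point requiring separate care — and the step I expect to be the main obstacle — is the constant term in the even case. There the cosine sum omits $\ell=0$, and the standalone constant $\frac{1}{2^{2k}}\binom{2k}{k}$ is \emph{half} of what a naive $\ell=0$ contribution would give, precisely because $\cos(0\cdot t)$ has no conjugate partner to double it; consequently Lemma~\ref{cooc}, which needs $t=k-p\ge 1$, does not apply at $p=0$. I would instead check directly that the $c^0$ coefficient vanishes, using the elementary identity $\sum_{m=0}^{k-1}(-1)^m\binom{2k}{m}=-\tfrac12(-1)^k\binom{2k}{k}$ (a consequence of $\sum_{m=0}^{2k}(-1)^m\binom{2k}{m}=0$ together with $\binom{2k}{m}=\binom{2k}{2k-m}$); this is exactly what forces the factor $\frac{1}{2^{2k}}$ rather than $\frac{1}{2^{2k-1}}$ on the constant. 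The odd case carries no constant term and is therefore cleaner throughout.

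Finally, as an independent check I would expand $\cos^{n}t=2^{-n}(e^{it}+e^{-it})^n=2^{-n}\sum_{j=0}^{n}\binom{n}{j}e^{i(n-2j)t}$ and group each exponential with its conjugate; this recovers both formulas at once and simultaneously confirms that Lemma~\ref{cooc} is precisely the combinatorial identity expressing that the transition matrices of Proposition~\ref{p1} and Theorem~\ref{t1} are mutually inverse.
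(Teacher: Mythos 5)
Your proof is correct, and it rests on the same two pillars as the paper's own argument --- Proposition \ref{p1} as the change of basis between the functions $\cos(mt)$ and the powers of $\cos t$, and Lemma \ref{cooc} as the key cancellation --- but it runs the logic in the opposite direction. The paper writes $\cos^{2k}(t)=\frac{\beta_0}{2}+\sum_{j=1}^{k}\beta_j\cos(2jt)$ with \emph{undetermined} coefficients (tacitly assuming such an expansion exists), substitutes Proposition \ref{p1}, and solves the resulting triangular system by descending induction on $\ell$, invoking Lemma \ref{cooc} at each step; you instead substitute the \emph{claimed} coefficients and verify that everything collapses to $\cos^{2k}(t)$, respectively $\cos^{2k+1}(t)$. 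The coefficient identities you check are exactly the equations the paper derives, so the computations coincide, but your direction needs neither the existence assumption nor the induction, which is a modest gain in self-containedness. You are also right --- where the paper only says ``the proof is similar'' --- that part (2) requires the odd-shift analogue of Lemma \ref{cooc}, and that its Chu--Vandermonde proof goes through verbatim. (Your closing ``independent check'' is in fact worth more than a check: expanding $2^{-n}(e^{it}+e^{-it})^{n}$ and pairing each exponential with its conjugate is a complete proof of the theorem by itself, shorter than either argument.)

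One correction: the ``main obstacle'' you anticipate at the constant term does not exist. Lemma \ref{cooc} \emph{does} apply at $p=0$: take $\ell=0$ and $t=k\geq 1$. With the convention $\binom{-1}{-1}=0$ (used explicitly by the paper in the proof of Lemma \ref{cheie}, and implicitly in \eqref{chu2}), the $s=0$ term of the lemma carries weight $\binom{0}{0}+\binom{-1}{-1}=1$, while every term with $s\geq 1$ carries weight $\binom{s}{s}+\binom{s-1}{s-1}=2$, so the lemma reads
$$\binom{2k}{k}+2\sum_{s=1}^{k}(-1)^{s}\binom{2k}{k-s}=0,$$
which is precisely the ``halved'' constant-term identity you need; your elementary identity $\sum_{m=0}^{k-1}(-1)^m\binom{2k}{m}=-\frac{1}{2}(-1)^k\binom{2k}{k}$ is equivalent to it. In other words, the factor $\frac{1}{2^{2k}}$ on the standalone constant is exactly matched by the weight-one $s=0$ term of the lemma; this is also why the paper, by writing the constant as $\beta_0/2$, needs no special case at $\ell=0$. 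Since your separate verification is itself correct, this is a detour rather than a gap.
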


\begin{proof}
(1) We write $\cos^{2k}(t)=\frac{\beta_0}{2}+\sum_{j=1}^k \beta_j \cos(2jt)$. From Proposition \ref{p1}, it follows that
\begin{equation}\label{fun}
\cos^{2k}(t) = \frac{\beta_0}{2} + \sum_{j=1}^k \beta_j \sum_{s=0}^j (-1)^s 2^{2j-2s-1} \left( \binom{2j-s}{s}+\binom{2j-s-1}{s-1} \right) \cos^{2j-2s}(t).
\end{equation}
From \eqref{fun}, it follows that $1=2^{2k-1}\beta_k$ and thus $\beta_k=\frac{1}{2^{2k-1}}$. In order to prove (1),
we have to show that
\begin{equation}\label{funy}
 \beta_{\ell} = \frac{1}{2^{2k-1}}\binom{2k}{k-\ell},\text{ for all }0\leq \ell\leq k.
\end{equation}
We proceed by descending induction on $\ell$. The assertion is true for $\ell=k$. Suppose $\ell<k$. From \eqref{fun} it follows that
$$0 = \sum_{s=0}^{k-\ell} (-1)^s 2^{2\ell-1}\left( \binom{2\ell+s}{s} + \binom{2\ell+s-1}{s-1}\right) \beta_{\ell+s}.$$
Therefore, using the induction hypothesis, it follows that:
\begin{align*}
\beta_{\ell} &= -\sum_{s=1}^{k-\ell} (-1)^s \left( \binom{2\ell+s}{s} + \binom{2\ell+s-1}{s-1}\right) \beta_{\ell+s} = \\
 &= -\frac{1}{2^{2k-1}} \sum_{s=1}^{k-\ell} (-1)^s \left( \binom{2\ell+s}{s} + \binom{2\ell+s-1}{s-1}\right) \binom{2k}{k-\ell-s}.
\end{align*}
Hence, in order to prove \eqref{funy}, it is enough to show that
$$ \sum_{s=1}^{k-\ell} (-1)^s \left( \binom{2\ell+s}{s} + \binom{2\ell+s-1}{s-1}\right) \binom{2k}{k-\ell-s} = -\binom{2k}{k-\ell},$$
which is equivalent to
$$ \sum_{s=0}^{k-\ell} (-1)^s \left( \binom{2\ell+s}{s} + \binom{2\ell+s-1}{s-1}\right) \binom{2k}{k-\ell-s} = 0.$$
The last identity follows from Lemma \ref{cooc}, using the substition $t=k-\ell$.

(2) The proof is similar.
\end{proof}

\begin{exm}\rm
(1) Let $k=2$. According to Theorem \ref{t1}(1), we have:
    $$\cos^4(t)= \frac{1}{2^3}\left( \binom{4}{0}\cos(4t) + \binom{4}{1}\cos(2t) \right) + \frac{1}{2^4}\binom{4}{2} =\frac{1}{8} (\cos(4t)+4\cos(2t)+3).$$
(2) Let $k=1$. According to Theorem \ref{t1}(2), we have:
    $$\cos^3(t) = \frac{1}{2^2}\left( \binom{3}{0}\cos(3t)+\binom{3}{1}\cos(t) \right) = \frac{1}{4}(\cos(3t)+3\cos(t)).$$
\end{exm}

\begin{cor}\label{c1}
\begin{enumerate}
\item[(1)] Let $k\geq 0$ be an integer. Then:
           $$\sin^{2k}(t) = \frac{1}{2^{2k-1}} \sum_{j=0}^{k-1} (-1)^{k-j} \binom{2k}{j} \cos((2k-2j)t) + \frac{1}{2^{2k}}\binom{2k}{k}.$$
\item[(2)] Let $k\geq 0$ be an integer. Then:
           $$\sin^{2k+1}(t) = \frac{1}{2^{2k}} \sum_{j=0}^{k} (-1)^{k-j} \binom{2k+1}{j} \sin((2k+1-2j)t).$$
\end{enumerate}
\end{cor}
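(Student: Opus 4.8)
The plan is to reduce everything to Theorem~\ref{t1} by means of the substitution $t \mapsto t-\frac{\pi}{2}$, exploiting the relation $\sin(t)=\cos\!\left(t-\frac{\pi}{2}\right)$ that already drives Proposition~\ref{p2}(1). Raising this to the $m$-th power gives $\sin^m(t)=\cos^m\!\left(t-\frac{\pi}{2}\right)$, so I would simply evaluate the right-hand sides of Theorem~\ref{t1}(1) and (2) at $t-\frac{\pi}{2}$ and track how each multiple-angle term transforms.

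For part~(1) I would start from $\sin^{2k}(t)=\cos^{2k}\!\left(t-\frac{\pi}{2}\right)$ and apply Theorem~\ref{t1}(1). The constant term $\frac{1}{2^{2k}}\binom{2k}{k}$ is unaffected by the shift, while each summand involves $\cos\!\left((2k-2j)\left(t-\frac{\pi}{2}\right)\right)=\cos\!\left((2k-2j)t-(k-j)\pi\right)$. Using $\cos(\theta-m\pi)=(-1)^m\cos\theta$ with $m=k-j$, this equals $(-1)^{k-j}\cos((2k-2j)t)$, which inserts exactly the sign factor $(-1)^{k-j}$ and yields the claimed expansion.

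For part~(2) I would likewise write $\sin^{2k+1}(t)=\cos^{2k+1}\!\left(t-\frac{\pi}{2}\right)$ and apply Theorem~\ref{t1}(2). Here each argument $2k+1-2j$ is odd, so $\cos\!\left((2k+1-2j)\left(t-\frac{\pi}{2}\right)\right)=\cos\!\left((2k+1-2j)t-(k-j)\pi-\frac{\pi}{2}\right)$; pulling out $(-1)^{k-j}$ as before and then using $\cos\!\left(\theta-\frac{\pi}{2}\right)=\sin\theta$ converts each cosine into $(-1)^{k-j}\sin((2k+1-2j)t)$, giving the stated sine expansion.

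The computation is entirely mechanical, so there is no genuine obstacle; the only point requiring care is the bookkeeping of signs, namely correctly separating the factor $(-1)^{k-j}$ coming from $\cos(\theta-(k-j)\pi)$ from the residual $-\frac{\pi}{2}$ shift in the odd case, which is precisely what turns cosines into sines. An equally valid alternative would be to shift by $+\frac{\pi}{2}$ via $\sin(t)=-\cos\!\left(t+\frac{\pi}{2}\right)$, but the $t-\frac{\pi}{2}$ substitution keeps the signs cleanest and matches the convention already used for Proposition~\ref{p2}(1).
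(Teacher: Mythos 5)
Your proposal is correct and is essentially the same as the paper's proof: the paper also substitutes $\sin(t)=\cos\!\left(t-\frac{\pi}{2}\right)$ into Theorem \ref{t1} and uses $\cos(t+n\pi)=(-1)^n\cos(t)$ to extract the sign $(-1)^{k-j}$, leaving part (2) as ``similar.'' Your write-up simply makes explicit the bookkeeping (including the residual $-\frac{\pi}{2}$ shift turning cosines into sines in the odd case) that the paper leaves to the reader.
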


\begin{proof}
Since $\sin(t) = \cos(t-\frac{\pi}{2})$ and $\cos(t+n\pi)=(-1)^n\cos(t)$, from Theorem \ref{t1} it follows that
$$ \sin^{2k}(t) = \frac{1}{2^{2k-1}} \sum_{j=0}^{k-1} (-1)^{k-j}\binom{2k}{j}\cos((2k-2j)t) + \frac{1}{2^{2k}}\binom{2k}{k}.$$
Hence (1) holds. The proof of (2) is similar.
\end{proof}

\section{An application}

Let $a\in \mathbb R$ with $|a|>1$. We consider the function
$$f(t):=\frac{1}{a-\cos t},\text{ where }t\in\mathbb R.$$
Since $|\cos t|\leq 1$, we have the expansion
\begin{equation}\label{eco1}
f(t)=\sum_{n=0}^{\infty} a^{-n-1}\cos^n t,\text{ for all }t\in \mathbb R.
\end{equation}
From \eqref{eco1}, using Theorem \ref{t1}, we can write
\begin{equation}\label{eco2}
f(t)=\frac{a_0}{2}+\sum_{n=1}^{\infty}a_n\cos(nt),\text{ where }a_n=\frac{2}{a}\sum_{k\geq 0}(2a)^{-n-2k}\binom{n+2k}{k},\text{ for all }n\geq 0.
\end{equation}
Using the binomial expansion, we have
\begin{equation}\label{eco3}
(1-a^{-2})^{-\frac{1}{2}} = \sum_{k\geq 0}(-1)^n \binom{-\frac{1}{2}}{k} a^{-2n} = \sum_{k\geq 0} \frac{1\cdot 3\cdot 5\cdots (2k-1)}{2^k\cdot k!}a^{-2k}
= \sum_{k\geq 0} \binom{2k}{k} (2a)^{-2k}.
\end{equation}
From \eqref{eco2} and \eqref{eco3} it follows that
\begin{equation}\label{eco4}
a_0=\frac{2}{a\sqrt{1-a^{-2}}}.
\end{equation}

\begin{lema}\label{lemas}
With the above notations, we have that $(a_{n-1}-aa_n)a_0=2a_n$.
\end{lema}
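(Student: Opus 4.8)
The plan is to substitute the explicit series for $a_{n-1}$, $a_n$ and $a_0$ recorded in \eqref{eco2} and \eqref{eco4}, and to reduce the claimed recurrence to the convolution identity of Lemma \ref{cheie}, which is exactly the combinatorial input the section has been building toward.

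First I would rewrite the difference $a_{n-1}-aa_n$ as a single power series in $(2a)^{-1}$. Absorbing one factor of $2a$ from the index shift $n-1\mapsto n$ gives $a_{n-1}=4\sum_{k\geq 0}(2a)^{-n-2k}\binom{n+2k-1}{k}$, while $aa_n=2\sum_{k\geq 0}(2a)^{-n-2k}\binom{n+2k}{k}$. Subtracting term by term and applying Pascal's rule $\binom{n+2k}{k}=\binom{n+2k-1}{k}+\binom{n+2k-1}{k-1}$ collapses the two series into a single one,
\[ a_{n-1}-aa_n = 2\sum_{k\geq 0}(2a)^{-n-2k}\left(\binom{n+2k-1}{k}-\binom{n+2k-1}{k-1}\right). \]

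Next I would multiply this by $a_0=\frac{2}{a}\sum_{i\geq 0}(2a)^{-2i}\binom{2i}{i}$ and read the result as a Cauchy product in the variable $(2a)^{-2}$, regrouping the terms according to $\ell=k+i$. The coefficient of $(2a)^{-n-2\ell}$ is then precisely the convolution
\[ \sum_{k=0}^{\ell}\left(\binom{n+2k-1}{k}-\binom{n+2k-1}{k-1}\right)\binom{2(\ell-k)}{\ell-k}=\binom{n+2\ell}{\ell}, \]
the evaluation being exactly Lemma \ref{cheie}. Hence $(a_{n-1}-aa_n)a_0=\frac{4}{a}\sum_{\ell\geq 0}(2a)^{-n-2\ell}\binom{n+2\ell}{\ell}=2a_n$, which is the asserted identity.

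I expect the only genuine obstacle to be this convolution step, and here all the work has already been done in Lemma \ref{cheie}; the remaining difficulties are purely bookkeeping, namely correctly absorbing the factor $2a$ coming from the shift in $a_{n-1}$ and matching the regrouping index $\ell=k+i$ with the summation range in Lemma \ref{cheie}. The rearrangement of the double sum into a Cauchy product is legitimate because $|2a|>2$ forces $|(2a)^{-2}|<\tfrac14$, which lies inside the radius of convergence $\tfrac14$ of all the series $\sum_k\binom{n+2k}{k}y^k$ involved, so each is absolutely convergent and the product may be reindexed freely. (One could instead read the recurrence off the functional equation $(a-\cos t)f(t)=1$, but the series route above is the one adapted to the identities already proved in Section~2.)
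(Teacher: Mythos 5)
Your proof is correct and takes essentially the same route as the paper's: both substitute the series from \eqref{eco2}, use Pascal's rule to reduce $2\binom{n+2k-1}{k}-\binom{n+2k}{k}$ to $\binom{n+2k-1}{k}-\binom{n+2k-1}{k-1}$, and then identify the coefficient of $(2a)^{-n-2\ell}$ in the Cauchy product with the convolution evaluated by Lemma \ref{cheie}. Your explicit justification of the rearrangement via absolute convergence (radius $\tfrac14$ versus $|(2a)^{-2}|<\tfrac14$) is a detail the paper leaves implicit, but it is not a different argument.
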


\begin{proof}
From \eqref{eco1}, the required formula is equivalent to
$$ \sum_{k\geq 0} (2a)^{-n-2k}\left(2\binom{n+2k-1}{k}-\binom{n+2k}{k}\right)\cdot \sum_{t\geq 0}(2a)^{-2t}\binom{2t}{t} =
   \sum_{\ell=0}^{\infty} (2a)^{-n-2\ell}\binom{n+2\ell}{\ell}.$$
Since $2\binom{n+2k-1}{k}-\binom{n+2k}{k}=\binom{n+2k-1}{k}-\binom{n+2k-1}{k-1}$, the above identity is equivalent to
$$	\sum_{k\geq 0} (2a)^{-2k}\left(\binom{n+2k-1}{k}-\binom{n+2k-1}{k-1}\right)\cdot \sum_{t\geq 0}(2a)^{-2t}\binom{2t}{t} =
    \sum_{\ell=0}^{\infty} (2a)^{-2\ell}\binom{n+2\ell}{\ell}.$$
The last identity follows immediately from Lemma \ref{cheie}.		
\end{proof}

\begin{teor}\label{t2}
With the above notations, we have that:
$$f(t)=\frac{1}{a-\cos t} = \frac{1}{a\sqrt{1-a^{-2}}}+\sum_{n=1}^{\infty} \frac{2a^{n-1} (1-\sqrt{1-a^{-2}})^n}{\sqrt{1-a^{-2}}}\cos(nt).$$
\end{teor}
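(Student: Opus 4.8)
The plan is to combine the Fourier representation already obtained in \eqref{eco2}, namely $f(t)=\frac{a_0}{2}+\sum_{n=1}^{\infty}a_n\cos(nt)$, with the explicit value of $a_0$ from \eqref{eco4} and the recurrence supplied by Lemma \ref{lemas}, and then simply to solve that recurrence in closed form. Since the constant term $\frac{a_0}{2}=\frac{1}{a\sqrt{1-a^{-2}}}$ already coincides with the claimed leading term, the entire content of the theorem reduces to determining $a_n$ for $n\geq 1$.

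First I would rewrite the identity $(a_{n-1}-aa_n)a_0=2a_n$ of Lemma \ref{lemas} as $a_{n-1}a_0=(2+aa_0)a_n$, which exhibits $(a_n)_{n\geq 0}$ as a geometric sequence:
$$a_n=q\,a_{n-1},\quad\text{where } q=\frac{a_0}{2+aa_0},\ n\geq 1,$$
so that $a_n=q^n a_0$ for all $n\geq 0$.

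Next I would evaluate $q$ explicitly. Substituting $a_0=\frac{2}{a\sqrt{1-a^{-2}}}$ gives $aa_0=\frac{2}{\sqrt{1-a^{-2}}}$, whence
$$q=\frac{a_0}{2+aa_0}=\frac{1}{a\left(1+\sqrt{1-a^{-2}}\right)}.$$
Multiplying numerator and denominator by $1-\sqrt{1-a^{-2}}$ and using $\left(1+\sqrt{1-a^{-2}}\right)\left(1-\sqrt{1-a^{-2}}\right)=a^{-2}$ collapses the denominator to $a^{-1}$ and yields the clean form
$$q=a\left(1-\sqrt{1-a^{-2}}\right).$$

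Finally I would assemble $a_n=q^n a_0=a^n\left(1-\sqrt{1-a^{-2}}\right)^n\cdot\frac{2}{a\sqrt{1-a^{-2}}}=\frac{2a^{n-1}\left(1-\sqrt{1-a^{-2}}\right)^n}{\sqrt{1-a^{-2}}}$, insert this together with $\frac{a_0}{2}$ into \eqref{eco2}, and the stated expansion follows. I do not anticipate a genuine obstacle: the only step needing care is the rationalization of $q$, where one must keep track of the branch of $\sqrt{1-a^{-2}}$; since $|a|>1$ this radical is a well-defined number in $(0,1)$, so that $|q|<1$ and the geometric series converges, consistent with the uniform convergence already guaranteed for $f$.
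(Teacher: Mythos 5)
Your proposal is correct and follows essentially the same route as the paper: both rest on the expansion \eqref{eco2}, the initial value \eqref{eco4}, and the recurrence of Lemma \ref{lemas}. The only difference is organizational --- the paper verifies by induction that the stated closed form satisfies the recurrence, while you solve the recurrence directly as a geometric progression with ratio $q=a\left(1-\sqrt{1-a^{-2}}\right)$, a mathematically equivalent but slightly more constructive arrangement of the same argument.
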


\begin{proof}
Note that $f(t)$ has expansion given in \eqref{eco2}, that is
$$\frac{1}{a-\cos t}=\frac{a_0}{2}+\sum_{n\geq 1}a_n\cos(nt).$$
Thus, in order to prove the theorem, we have to show that
 \begin{equation}\label{an}
 a_n=\frac{2a^{n-1}(1-\sqrt{1-a^{-2}})^{n}}{\sqrt{1-a^{-2}}},\text{ for all }n\geq 0.
\end{equation}
We use induction on $n\geq 0$. The case $n=0$ follows from \eqref{eco4}. In order to prove the induction step,
it is enough show that the desired expression of $a_n$ satisfies the recurrence relation given in Lemma \ref{lemas}.
Indeed, if we replace $a_n, a_{n-1}$ and $a_0$ with their required expressions, we get
\begin{equation}\label{opa1}
(a_{n-1}-aa_n)a_0 = \frac{ 2a^{n-2}(1-\sqrt{1-a^{-2}})^{n-1} - 2a^n (1-\sqrt{1-a^{-2}})^{n}  }{\sqrt{1-a^{-2}}} \cdot \frac{2}{a\sqrt{1-a^{-2}}}.
\end{equation}
On the other hand, we have:
\begin{equation}\label{opa2}
 \frac{ \frac{1}{a (1-\sqrt{1-a^{-2}} ) } - a }{a\sqrt{1-a^{-2}}} = \frac{ a(1+\sqrt{1-a^{-2}}) - a }{a\sqrt{1-a^{-2}}} = 1.
\end{equation}
From \eqref{opa1} and \eqref{opa2} it follows that $(a_{n-1}-aa_n)a_0=2a_n$, as required. Hence, the proof is complete.
\end{proof}

\begin{cor}\label{c2}
With the above notations, we have that:
\begin{align*}
 g(t)=\frac{1}{a-\sin t} = & \frac{1}{a\sqrt{1-a^{-2}}} + \sum_{k=1}^{\infty} (-1)^k\frac{2a^{2k-1}(1-\sqrt{1-a^{-2}})^{2k}}{\sqrt{1-a^{-2}}}\cos(2kt) + \\
                            & + \sum_{k=0}^{\infty} (-1)^k\frac{2a^{2k}(1-\sqrt{1-a^{-2}})^{2k+1}}{\sqrt{1-a^{-2}}}\sin((2k+1)t).
\end{align*}														
\end{cor}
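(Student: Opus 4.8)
The plan is to deduce the expansion of $g(t)$ directly from Theorem \ref{t2} via the phase shift $\sin(t)=\cos(t-\frac{\pi}{2})$, which is the same device used throughout the paper (for instance in Proposition \ref{p2}(1) and in Corollary \ref{c1}). Since $g(t)=\frac{1}{a-\sin t}=\frac{1}{a-\cos(t-\frac{\pi}{2})}=f(t-\frac{\pi}{2})$, I would substitute $t\mapsto t-\frac{\pi}{2}$ into the Fourier series for $f$ provided by Theorem \ref{t2}. The constant term $\frac{1}{a\sqrt{1-a^{-2}}}$ is unaffected by the shift, so the only real work is to transform each cosine term $\cos(n(t-\frac{\pi}{2}))=\cos(nt-\frac{n\pi}{2})$.

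Next I would separate the index $n$ according to its parity. For $n=2k$ with $k\geq 1$ one has $\cos(2k(t-\frac{\pi}{2}))=\cos(2kt-k\pi)=(-1)^k\cos(2kt)$, while for $n=2k+1$ with $k\geq 0$ one computes $\cos((2k+1)(t-\frac{\pi}{2}))=\cos((2k+1)t-k\pi-\frac{\pi}{2})=(-1)^k\cos((2k+1)t-\frac{\pi}{2})=(-1)^k\sin((2k+1)t)$, exactly the two phase identities already recorded in the proofs of Proposition \ref{p2}(1) and Corollary \ref{c1}. Substituting $n=2k$ into the coefficient $\frac{2a^{n-1}(1-\sqrt{1-a^{-2}})^n}{\sqrt{1-a^{-2}}}$ from Theorem \ref{t2} produces the stated coefficient of $\cos(2kt)$, and substituting $n=2k+1$ produces the stated coefficient of $\sin((2k+1)t)$, with the sign $(-1)^k$ supplied in each case by the phase shift.

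The only delicate point is regrouping the single sum over $n\geq 1$ into the two sums over $k$ (the even part yielding the cosine series and the odd part yielding the sine series), together with the attendant reindexing of the power data; this bookkeeping of the factors $(-1)^k$ and of the exponents is the main, if modest, obstacle. The rearrangement is legitimate because $|a|>1$ forces $|1-\sqrt{1-a^{-2}}|<1$, so the series of Theorem \ref{t2} converges absolutely and uniformly and may be split and reindexed term by term. Collecting the three pieces then yields precisely the claimed expansion for $g(t)$.
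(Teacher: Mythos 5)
Your proposal is correct and follows essentially the same route as the paper's own proof: writing $g(t)=f(t-\frac{\pi}{2})$, substituting into the series of Theorem \ref{t2}, and splitting by parity of $n$ using $\cos(nt-\frac{n\pi}{2})=(-1)^k\cos(nt)$ for $n=2k$ and $(-1)^k\sin(nt)$ for $n=2k+1$. Your additional remark on absolute convergence (which in fact requires $|a|\,|1-\sqrt{1-a^{-2}}|<1$, a consequence of $|a|>1$) is a harmless refinement the paper leaves implicit.
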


\begin{proof}
Note that $\sin(t)=\cos(t-\frac{\pi}{2})$ implies $g(t)=f(t-\frac{\pi}{2})$. Hence,
the required formula follows from Theorem \ref{t2} and the identities:
$$\cos(nt - \frac{n\pi}{2}) = \begin{cases} (-1)^k \cos(nt),& n=2k \\ (-1)^k \sin(nt),& n=2k+1 \end{cases}.$$
\end{proof}

\begin{exm}\rm
(1) According to Theorem \ref{t2} we have that
$$f(t)=\frac{1}{\sqrt 3 - \cos t} = \frac{1}{2\sqrt 3} + \frac{1}{\sqrt 3} \sum_{n=1}^{\infty} (-\sqrt 3)^{n} \cos(nt),\text{ for all }t\in \mathbb R.$$
(2) According to Corollary \ref{c2} we have that
$$g(t)=\frac{1}{\sqrt 3 - \sin t}= \frac{1}{2\sqrt 3} + \frac{1}{\sqrt 3} \sum_{k=1}^{\infty} (-3)^k \cos(2kt) - \sum_{k=0}^{\infty} (-3)^k \sin((2k+1)t)
,\text{ for all }t\in \mathbb R.$$
\end{exm}


\begin{thebibliography}{9}

\bibitem{Rior} J. Riordan, {\em Combinatorial identities}, J. Wiley, New York, 1968.

\bibitem{stein} E. M. Stein, R. Shakarchi, {\em Fourier analysis. An introduction}, Princeton University Press, 2003.
	
\end{thebibliography}
\end{document}